\newcommand{\comment}[1]{} 
\newtheorem{theorem}{Theorem}
\newtheorem{lemma}[theorem]{Lemma}
\newtheorem{example}[theorem]{Example}
\newtheorem{remark}[theorem]{Remark}
\newtheorem{definition}[theorem]{Definition}
\newcommand{\tf}{\tilde f}
\newcommand{\lam}{\lambda}
\newcommand{\eps}{\varepsilon}
\newcommand{\R}{\mathcal R}
\newcommand{\RL}{\mathbb R}
\newcommand{\Rn}{\RL^n}
\newcommand{\newold}[2]{{\bf new:}{\color{blue}#1}{\bf old:}{\color{red}#2}}%
\newcommand{\note}[1]{{\color{teal}{\bf Note:} #1}}%
\begin{document}

\title{
On Second-Order Cone Functions
}

\author{Shafiu Jibrin \& James W. Swift\\
Department of Mathematics and Statistics\\
Northern Arizona University\\
Flagstaff, AZ 86011-5717, USA}

\date{\today}

\maketitle

\begin{abstract}
We consider the {\it second-order cone function} (SOCF) $f: \RL^n \to \RL$
defined by $f(x)= c^T x + d -\|A x + b \|$, with parameters
$c \in \RL^n$, $d \in \RL$, $A \in \RL^{m \times n}$, and $b \in \RL^m$.
Every SOCF is concave.  We give
necessary and sufficient conditions for strict concavity of $f$. 
The parameters $A$ and $b$ are not uniquely determined.
We show that every SOCF can be written in the 
form
$f(x) = c^T x + d -\sqrt{\delta^2 + (x-x_*)^TM(x-x_*)}$.
We give necessary and sufficient conditions for the parameters $c$, $d$, $\delta$,
$M = A^T A$, and $x_*$ to be uniquely determined.
We also give necessary and sufficient conditions for $f$ to be bounded above.
\end{abstract}

\vfill

\noindent
\emph{2000 Mathematics Subject Classification:} 90C22, 90C25, 90C51.
\vspace{0.3cm}

\noindent
\emph{Keywords and phrases:}
Second-order cone constraint, second-order cone programming, second-order cone function, convexity, optimization, 
interior-point methods.

\vspace{0.3cm}
\noindent
\emph{email address:} Jim.Swift@nau.edu, Shafiu.Jibrin@nau.edu

\vfill

\setcounter{equation}{0}
\pagebreak
\section{Introduction}
Second-order cone programming is an important convex optimization problem \cite{Van98, Vand98, Zhao21, SOCPwiki}.
A second-order 
cone constraint has the form $\|A x + b\|\le c^T x + d $,
where $\|\cdot\|$ is the Euclidean norm.
This second-order cone constraint is equivalent to the inequality $f(x) \geq 0$, where $f$ is 
what we call a second-order cone function.
The solution set of the constraint is convex, and the function
$f$ is concave \cite{Van98, Weig10}.  

In the following definition, we use $\RL$ to denote the set of real numbers and 
$\RL^{m \times n}$ to denote the set of $m \times n$ matrices with real entries.  Of course, $m$ and $n$ are positive integers.
\begin{definition}
\label{SOCFdef}
A \emph{second-order cone function (SOCF)} is a function $f: \RL^n \to \RL$ that can be written as
\begin{equation}
\label{socfOld}
f(x)=  c^T x + d -\|A x + b \|
\end{equation}
with parameters $c \in \RL^n$,  $d \in \RL$, $A \in \RL^{m \times n}$, and $b \in \RL^m$.
\end{definition}
In second-order cone programming 
a linear function of $x$ is minimized subject to one or more second-order cone constraints, along with the constraint
$F x = g$, where $F \in \RL^{p \times n}$ and $g \in \RL^p$.
The solution set of $F x = g$ is an affine subspace, and we will show that
the restriction of an SOCF to an affine subspace 
is another SOCF.  Thus, from a mathematical point of view the constraint $F x = g$
is not necessary, although in applications it can be convenient.
In this paper we do not consider the constraint
$Fx = g$ but instead focus on understanding the family of SOCFs.


There are interior-point methods for solving second-order cone programming problems. 
These methods usually use SOCFs to
impose the second-order cone constraints \cite{Pot20, Vand98, Weig10, Dawu21}.  
Solvers for second-order cone programming problems include CVXOPT and MATLAB \cite{SOCPsol, And23}. 
The study of second-order cone programming and its applications has continued to generate interest for over three decades \cite{Zhang16, Faw19, Zhao21, Luo22, Gil22, Tang22, Chowdhury23}. 

The current research was started to get a deeper
understanding of SOCFs to
improve interior-point algorithms for finding the
weighted analytic center of a system of second-order cone constraints 
\cite{Aghili22,Borg10, Dawu21}.
The current work can lead to improved algorithms.

In this paper, we give a thorough description of the family of SOCFs.  In
the form of Equation~\eqref{socfOld}, 
the parameters $A$ and $b$ are not uniquely determined,
since $\| A x + b \| = \| Q(Ax+b)\| = \|(QA) x + (Qb) \|$ for any orthogonal $m \times m$
matrix $Q$.
We show that every SOCF can be written in the 
form
\[f(x) = c^T x + d -\sqrt{\delta^2 + (x-x_*)^TM(x-x_*)},\]
with the parameters $\delta \geq 0$, $x_* \in \Rn$, and the positive semidefinite
$M = A^T A \in \RL^{n \times n}$ replace the parameters $A$ and $b$.
We show that these new parameters are unique if and only if $M$ is positive definite.

It is known that every SOCF $f$ is concave \cite{Van98, Weig10}.
We show that $f$ is strictly concave 
if and only if $\text{rank}(A) = n$ and $b \not \in \text{col}(A)$,
where $\text{col}(A)$ denotes the column space of $A$.
In terms of the new parameters, the SOCF is strictly concave
if and only if $M$ is positive definite and $\delta > 0$.

In the case where $M$ is positive definite, we show that $f$ is bounded above if and only if $c^T M^{-1}c \leq 1$. 
We show that the convex set $\{ x \in \Rn \mid f(x) \geq 0\}$ is bounded if and only
if $M$ is positive definite and $c^T M^{-1}c < 1$. 

Our results have computational implications for convex optimization problems involving second-order constraints such as the problem of minimizing weighted barrier functions presented in \cite{Borg10, Aghili22}.  This is related to the problem of finding a weighted analytic center for second-order cone constraints given in \cite{Dawu21}. There are also computational implications for the problem of computing the region of weighted analytic centers of a system of several second-order cone constraints. This is under investigation as part of our current research which is an extension of the work given in \cite{Dawu21}. 

In the problems presented in \cite{Borg10, Aghili22, Dawu21},
the boundedness of the feasible region guarantees the existence of a minimizer,
and the strict convexity of the barrier function guarantees the uniqueness of the minimizer. Also, the strict convexity of the barrier function affects how quickly we can find the minimizer using these algorithms. The determination of the strict concavity of $f$ is related to the strict convexity of the barrier function. The boundedness of the feasible region of the SOC constraints system is also related to the boundedness of $f$. If a single $f$ is bounded, then the feasible region of the SOC constraints system is bounded.

Convex optimization algorithms perform well and more efficiently when the problem is known to be bounded and the objective function is strictly convex. If a second-order cone function is strictly concave, its gradient and Hessian matrix are defined, and the Hessian is invertible. 
The corresponding barrier function is similarly well-behaved,
and Newton's method and Newton-based methods work well for the problem. 
However, many optimization problems are not bounded or have objective functions that are not strictly convex. Our results would allow one to recognize convex optimization problems involving second-order cone constraints (as in \cite{Borg10, Aghili22, Dawu21}) that can be solved efficiently, or to assist in reformulating those that are hard to solve.


\setcounter{equation}{0}
\section{Properties of Second-Order Cone Functions
}

The SOCFs on $\RL$ (that is, $n = 1$) are the simplest to understand,
and give insight into the general case.

\begin{example}
\label{SOCFonR1}
Consider $f: \RL \to \RL$ defined by Equation(\ref{socfOld}) with 
$A = \left [ \begin{smallmatrix}1 \\0 \end{smallmatrix} \right ]$ and $b = \left[ \begin{smallmatrix}-x_* \\ \delta \end{smallmatrix} \right ]$.  Thus $A x + b =  \left[ \begin{smallmatrix}x-x_* \\ \delta \end{smallmatrix} \right ] $, and
$\| Ax+b\| = \sqrt{\delta^2 + (x-x_*)^2}$, so
$f(x) =  cx + d - \sqrt{\delta^2 + (x-x_*)^2}$. 
Figure~\ref{fig:1D}
shows several graphs with various values of the real parameters $\delta$, $x_*$, $c$, and $d$.
If $\delta \neq 0$, then $f$ is smooth and strictly concave, as shown by the solid graphs.  
If $\delta = 0$, then  $f(x) = dx + d - |x-x_*|$ is piecewise linear with a corner at $(x_*, c x_* + d)$, as shown by the dashed graphs.
Note that $f(x_*) = c x_* +d - |\delta|$ for any value of $\delta$, so the solid
graphs in Figure~\ref{fig:1D} (with $\delta = 0.2$) pass a distance 0.2 below the corner of the dashed graphs (with $\delta = 0$), as indicated by the double arrows.
\begin{figure}
    \begin{center}
    \begin{tikzpicture}
        \node at (-5.1,0) {\includegraphics[width = 4.4cm]{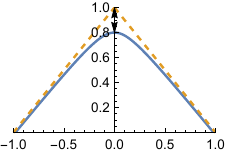}};
        \node at (0,0) {\includegraphics[width = 4.4cm]{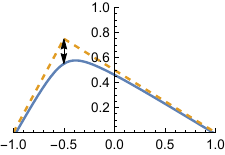}};
        \node at (5.1,0) {\includegraphics[width = 4.4cm]{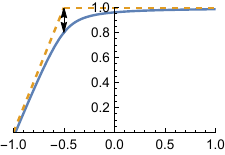}};
        \node at (-5.1, -2) {\small $c = 0, \ d = 1, x_* = 0$};
        \node at (0, -2) {\small$ { c = 0.5, \ d = 1, \ x_* = -0.5 }$};
        \node at (5.1, -2) {\small $c = 1, \ d = 1.5, \ x_* = -0.5$};
    \end{tikzpicture}
    \end{center}
    \caption{
     \label{fig:1D}
     Graphs of second-order cone functions $f(x) =  cx + d- \sqrt{\delta^2 + (x-x_*)^2}$, as described in
    Example \ref{SOCFonR1}.
    In each of the three plots, the parameters $c$, $d$, and $x_*$ are indicated. The dashed curve has $\delta = 0$, and the solid curve has $\delta = 0.2$.}
   
\end{figure}
\end{example}
One important property of SOCFs is that their restriction to an affine subspace is another
SOCF. We will frequently restrict  to a 1-dimensional affine subspace.
\begin{remark}
\label{restrictionToAffine}
    Let $f: \Rn \to \RL$ be  written in the form of Equation~(\ref{socfOld}).  
    The restriction of $f$
    to the affine subspace $\{x_0 + B y\mid y \in \RL^k\}$, for some $x_0 \in \Rn$ and
    $B \in \RL^{n \times k}$ is
    $$f(x_0 +B y) = (c^T B) y + (c^T x_0 + d) - \| (A B) y + (A x_0 + b) \|,$$
    which is an SOCF on $\RL^k$ with the variable $y$.
\end{remark}

Recall that a function $f: \RL^n \to \RL$ is \emph{concave}
provided that $f((1-t)x_0 + t x_1) \geq (1-t) f(x_0) + t f(x_1)$
for all $x_0 \neq x_1 \in \RL^n$, and all $t \in (0,1)$.  The function
is \emph{strictly concave} if the inequality is strict. A twice differentiable function $f: \RL \to \RL$ is concave if $f''(x) \leq 0$ for all $x$, and strictly concave if the inequality is strict.
\begin{lemma}
\label{SOCF1D}
    Let $f: \RL \to \RL$ be the general SOCF of one variable, defined by $f(x) = c x + d - \| Ax + b \|$ with parameters $c, d \in \RL$ and $A, b \in \RL^m$.
    The function $f$ is concave for all parameters, and $f$ is strictly concave if and only if $A \neq 0$ and $b \not \in \text{col}(A)$.
\end{lemma}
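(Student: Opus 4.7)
The plan is to treat concavity and strict concavity separately. For concavity, I would note that the Euclidean norm is convex by the triangle inequality, so $x \mapsto \|Ax + b\|$ is convex as the composition of a convex function with an affine map. Subtracting this convex function from the affine function $cx + d$ produces a concave function, settling the first claim for all parameters.

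For strict concavity, I would put the norm into a clean ``completed-square'' form. Expanding gives $\|Ax+b\|^2 = \|A\|^2 x^2 + 2(A^T b)x + \|b\|^2$. If $A = 0$, then $f(x) = cx + d - \|b\|$ is affine, and hence not strictly concave. If $A \neq 0$, completing the square yields
\[ \|Ax+b\| = \sqrt{\|A\|^2 (x-x_*)^2 + \delta^2}, \]
where $x_* = -A^Tb/\|A\|^2$ and $\delta^2 = \|b\|^2 - (A^Tb)^2/\|A\|^2$. By the Cauchy--Schwarz inequality, $\delta^2 \geq 0$, with equality exactly when $A$ and $b$ are linearly dependent, i.e.\ when $b \in \text{col}(A)$ (using $A \neq 0$).

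Then I would split into two cases. If $b \notin \text{col}(A)$, then $\delta > 0$ and $f$ is $C^2$; a direct computation gives
\[ f''(x) = -\,\frac{\|A\|^2 \delta^2}{\left(\|A\|^2(x-x_*)^2 + \delta^2\right)^{3/2}} < 0, \]
so $f$ is strictly concave by the standard second-derivative criterion stated just before the lemma. If instead $A \neq 0$ but $b \in \text{col}(A)$, then $\delta = 0$ and $\|Ax+b\| = \|A\| \cdot |x - x_*|$, so $f$ is piecewise linear with a corner at $x_*$. Choosing any two distinct points $x_0, x_1$ on the same side of $x_*$ makes $f$ linear on the segment joining them, violating strict concavity.

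The main obstacle is the piecewise-linear case $b \in \text{col}(A)$, where $f$ is not twice differentiable and the convenient $f''$ criterion is unavailable, so I must argue the failure of strict concavity directly from the definition. The Cauchy--Schwarz reduction and the second-derivative computation in the smooth case are otherwise routine.
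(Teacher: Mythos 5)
Your proof is correct and follows essentially the same route as the paper's: both reduce $\|Ax+b\|$ to the form $\sqrt{\delta^2 + \|A\|^2(x-x_*)^2}$ (you by completing the square and Cauchy--Schwarz, the paper by orthogonal projection and the Pythagorean theorem, which is the same computation), then conclude strict concavity from $f''<0$ when $\delta>0$ and exhibit the piecewise-linear failure when $\delta=0$ or $A=0$. The only minor difference is that you establish concavity for all parameters upfront via the composition-of-convex-with-affine argument rather than as a byproduct of the case analysis, which is a clean and equally valid choice.
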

\begin{proof}
If $A = 0$, then $f(x) = cx +d - \|b\|$ is linear, and hence concave but not strictly concave.
\begin{figure}
\begin{center}
\begin{tikzpicture}
\draw (0,0) -- (6,3) node[anchor=190] {$\text{col}(A)$};
\draw[very thick, ->] (.5,.25) -- (2,1);
\fill[black](.5, .25) circle (.06cm);
\draw (1.8,.5) node {$A$};
\draw [very thick] (3, 1.5) -- (5.5, 2.75) -- (2.1, 3.3) -- cycle;
\draw (3.3, 1.65) -- (3.15, 1.95) -- (2.85, 1.8) ;
\fill[black](3, 1.5) circle (.06cm);
\draw (3.3, 1.1) node {$A x_*$};
\fill[black](5.5, 2.75) circle (.06cm);
\draw (4.9, 1.6) node {$\| A(x-x_*)\|$};
\draw (5.7, 2.4) node {$A x$};
\fill[black](2.1, 3.3) circle (.06cm);
\draw (1.80, 3.65) node {$-b$};
\draw (2.3, 2.3) node {$\delta$};
\draw (4.0, 3.4) node {$\|Ax + b\|$};
\end{tikzpicture}
\end{center}
\caption{
\label{1Dproof}
The geometry of an SOCF on $\RL$.  In this case $A \in \RL^2 = \RL^{2\times 1}$.
Note that $A x_*$ is the point in $\text{col}(A)$ that is closest to $-b$.
See Lemma~\ref{SOCF1D}.
}
\end{figure}
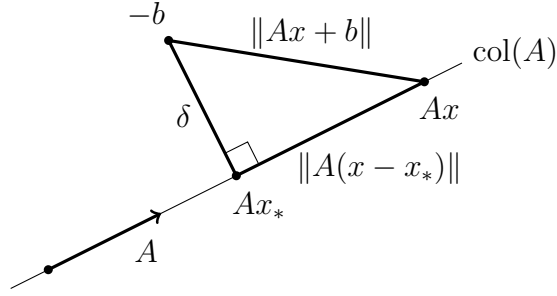
Assume $A \neq 0$.  Then $A x_*$, where $x_* = -(A^T  b)/(A^T A)$, is the point in $\text{col}(A) = \text{span}(A)$ closest to $-b$.
Let $\delta = \| A x_* + b\|$ be the distance from $A x_*$ to $-b$.  Thus 
$\|Ax + b\|^2 = \delta^2 + \|A(x-x_*)\|^2$ by the Pythagorean theorem, and
$f(x) = c x + d - \sqrt{\delta^2 + \|A(x-x_*)\|^2} = cx +d - \sqrt{\delta^2 + (A^T A) (x-x_*)^2}$.
The constant $A^TA$ is a positive real number.
The geometry is shown in Figure~\ref{1Dproof}.
Note that $\delta = 0$ if and only if $b \in \text{col}(A)$.
If $\delta = 0$, then $f(x) = c x + d - \sqrt{A^TA} |x-x^*|$ is piecewise linear with a downward bend at $x_*$, and hence concave but not strictly concave.

So far, we have proved that $f$ is concave but not strictly concave if $A = 0$ or $b \in \text{col}(A)$.

Assume that $A\neq 0$ and $b \not \in \text{col}(A)$.  Then $\delta > 0$, and $f$ strictly concave, since
$$
f''(x) = \frac{-\delta^2 A^T A}{\big(\delta^2 + A^TA (x-x_*)^2\big)^{3/2} }
$$
is defined and negative for all $x$.
\end{proof}
\begin{theorem}
\label{strictlyConcave}
Every second-order cone function $f$ is concave. Furthermore, $f$ is strictly concave if and only if
$\text{rank}(A) = n$ and $b\not \in \text{col}(A)$, using the parameters in
Definition~\ref{SOCFdef}.
\end{theorem}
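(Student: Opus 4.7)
The plan is to reduce everything to the one-variable case already handled in Lemma \ref{SOCF1D}, using Remark \ref{restrictionToAffine} with $k = 1$. Recall that a function on $\Rn$ is (strictly) concave if and only if its restriction to every line is (strictly) concave. Given $x_0 \in \Rn$ and a direction $v \in \Rn$ with $v \neq 0$, the restriction of $f$ to the line $\{x_0 + tv : t \in \RL\}$ is, by Remark \ref{restrictionToAffine}, the one-variable SOCF
\[
g(t) = (c^T v)\, t + (c^T x_0 + d) - \| (Av)\, t + (A x_0 + b) \|,
\]
so the coefficient-vectors for the 1D problem are $A' = Av$ and $b' = A x_0 + b$.

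For concavity, I apply Lemma \ref{SOCF1D} directly: for every choice of $x_0$ and $v$, the restriction $g$ is a 1D SOCF and is therefore concave. Since concavity of $f$ is equivalent to concavity along every line, $f$ is concave.

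For strict concavity, the forward implication is the interesting direction. Assume $\text{rank}(A) = n$ and $b \notin \text{col}(A)$; I must show every restriction $g$ is strictly concave, i.e., by Lemma \ref{SOCF1D}, that $A v \neq 0$ and $A x_0 + b \notin \text{col}(Av) = \text{span}(Av)$ for every $x_0$ and every $v \neq 0$. The first condition is immediate: $\text{rank}(A) = n$ forces $A$ to have trivial kernel. For the second, if $A x_0 + b = \lambda\, Av$ for some $\lambda \in \RL$, then $b = A(\lambda v - x_0) \in \text{col}(A)$, contradicting the hypothesis.

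For the converse, I argue by contrapositive and produce an explicit bad line in each case. If $\text{rank}(A) < n$, choose $v \neq 0$ with $Av = 0$; then along any line in direction $v$ we have $A' = 0$, so $g$ is affine and not strictly concave. If instead $b \in \text{col}(A)$, pick $x_* \in \Rn$ with $A x_* = -b$; then along any line through $x_*$ we have $b' = A x_* + b = 0 \in \text{col}(A')$, so by Lemma \ref{SOCF1D} the restriction $g$ is not strictly concave. (If $A = 0$ both cases collapse into the first.) Since strict concavity along every line fails, $f$ is not strictly concave. The only mild subtlety is keeping straight the roles of the original parameters $(A,b)$ and the induced 1D parameters $(Av, A x_0 + b)$, but no real obstacle arises once the reduction via Remark \ref{restrictionToAffine} is in place.
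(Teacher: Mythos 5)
Your proposal is correct and follows essentially the same route as the paper: reduce to one-dimensional restrictions along lines via Remark~\ref{restrictionToAffine} and invoke Lemma~\ref{SOCF1D}, checking that $\operatorname{rank}(A)=n$ and $b\notin\operatorname{col}(A)$ translate exactly into the conditions $Av\neq 0$ and $Ax_0+b\notin\operatorname{col}(Av)$ for the induced 1D parameters. The only cosmetic difference is that the paper recomputes the 1D data ($\tilde\delta$, $t_*$) explicitly, whereas you quote the criterion of Lemma~\ref{SOCF1D} directly, which is slightly cleaner but not a different argument.
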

\begin{proof}
    Let $x_0 \neq x_1 \in \RL^n$, and define $v = x_1-x_0$.  
    Let $g: \RL \to \RL$ be defined by 
    $g(t) := f((1-t) x_0 + t x_1) = f(x_0 + t v) = 
    c^T(x_0 + t v) + d -\| A(x_0 + t v) + b\|$.
    It follows directly from the definition that $f$ is (strictly) concave if and only if
    $g$ is (strictly) concave for all $x_0 \neq x_1$.
    Note that $Av \in \RL^m$.  If $Av = 0$, then $g$ is linear.  If $Av \neq 0$, then
    $$
    g(t) = \tilde c \, t + \tilde d - \sqrt{ \tilde \delta^2 + \|Av\|^2 (t - t_*)^2},
    $$
      where $\tilde c = c^T v$, $\tilde d = c^Tx_0 + d$, 
      $\tilde \delta = \|A(x_0 + t_* v) + b\|$, and $t_* = \frac{-(Av)^T(A x_0 + b)}{(Av)^T Av}$ are all real numbers.
    Thus, $g$ is a second-order cone function of one variable.  By Lemma~\ref{SOCF1D}, $g$ is concave for all choices
    of $x_0$ and $x_1$, and hence $f$ is concave.

    Since $A \in \RL^{m \times n}$, it follows that $\text{rank}(A) \leq n$.  If $\text{rank} (A) < n$ then $A^T A$ is singular, and there exist $x_0 \neq x_1 = x_0 + v$ such that $A v = 0$ and hence $g$ is linear.  If $b \in \text{col}(A)$ then there exist $x_0$ such that $A x_0 + b = 0$.  Thus $t_* = 0$ and $\tilde \delta = 0$, and $g$ is piecewise linear with a downward corner. Thus, if $\text{rank}(A) < n$ or $b \in \text{col}(A)$ (or both), we can find $x_0 \neq x_1$ such that $g$ is concave but not strictly concave, and hence $f$ is not strictly concave.

    Now assume $\text{rank}(A) = n$ and $b \not \in \text{col}(A)$.  It follows that $A v \neq 0$ and $\tilde \delta > 0$ for all $x_0 \neq x_1$.
    Lemma~\ref{SOCF1D} implies that $g$
    is strictly concave for all $x_0 \neq x_1$, 
    and it follows that $f$ is strictly concave.
\end{proof}

Note that $A \in \RL^{n \times n}$ cannot satisfy $\text{rank}(A) = n$ and $b \not \in \text{col}(A)$.
Therefore, any SOCF with $A \in \RL^{n \times n}$ is concave but not strictly concave.  

\begin{figure}
\begin{center}
 \begin{tabular}{cc}
\includegraphics[width = 2.0in]{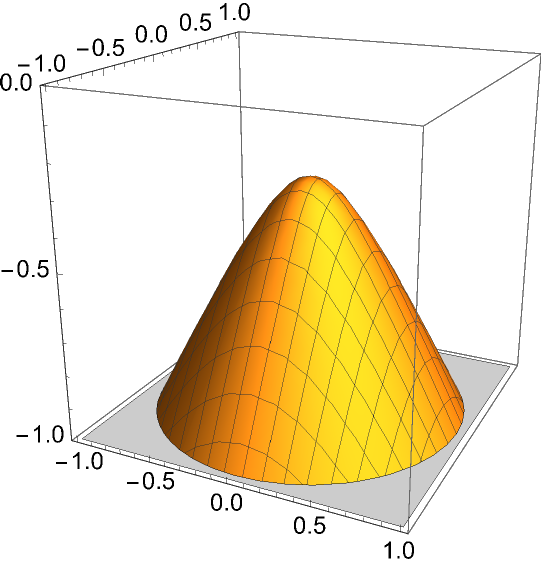} &
\includegraphics[width = 2.0in]{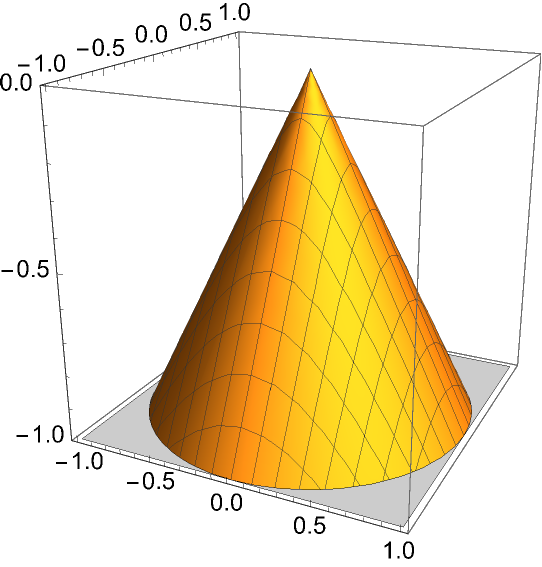}\\
(a) & (b)
\\ \\
\includegraphics[width = 2.0in]{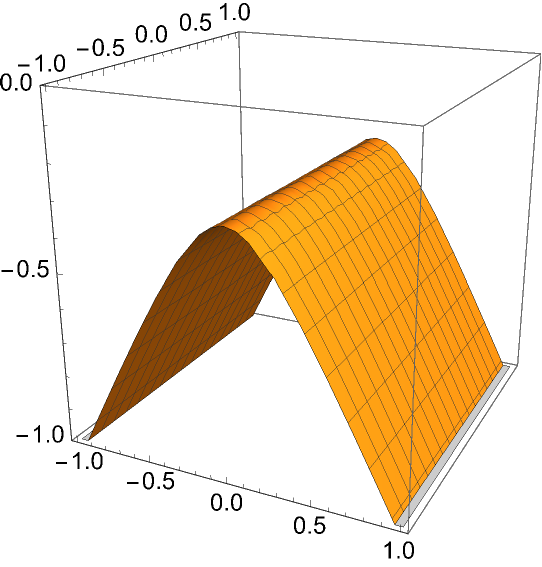} &
\includegraphics[width = 2.0in]{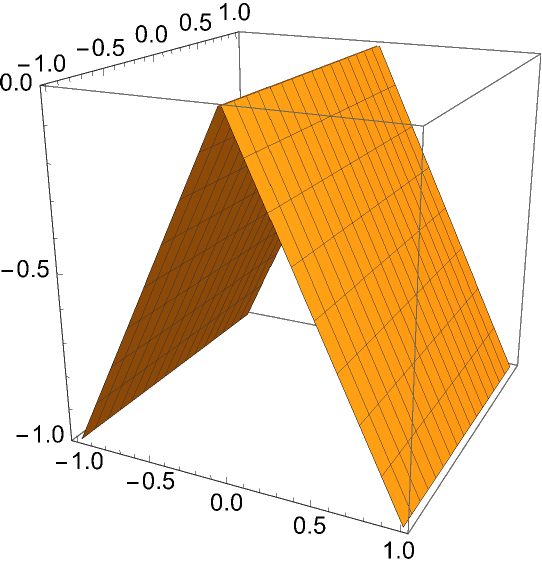} \\
(c) & (d)
\end{tabular}   
\end{center}
\caption{
\label{4SOCFfigs}
Graphs of the four SOCFs on $\RL^2$ defined in Example~\ref{4SOCFexample}.  
Note that the graph of the SOCF (b) is indeed a cone.
The top row shows functions with $\text{rank}(A)  = 2$
and the bottom row shows $\text{rank}(A)  = 1$.
The left column shows $b \not \in \text{col}(A)$ and the right
column shows $b \in \text{col}(A)$.
All the functions graphed are concave, 
but only the upper left function is strictly concave, 
in agreement with Theorem~\ref{strictlyConcave}.
}
\end{figure}

\begin{example}
\label{4SOCFexample}
We give four examples of SOCFs on $\RL^2$,
with different truth values of $\text{rank}(A) = 2$ or $b \in \text{col}(A)$.  
These SOCFs have $c = 0$ and $d = 0$, so $f(x) = - \|Ax+b\|$.
See Figure~\ref{4SOCFfigs}.

\emph{(a)} $A = \left[ \begin{smallmatrix}
    1 & 0 \\
    0 & 1 \\
    0 & 0
\end{smallmatrix} \right] $ and $b = \left[ \begin{smallmatrix}
    0 \\
    0 \\
    0.3
\end{smallmatrix} \right] $ yields
$f(x) = - \sqrt{0.09 + x_1^2 + x_2^2}$.

\emph{(b)} $A = \left[ \begin{smallmatrix}
    1 & 0 \\
    0 & 1
\end{smallmatrix} \right] $ and $b = \left[ \begin{smallmatrix}
    0 \\
    0
\end{smallmatrix} \right] $ yields
$f(x) = - \sqrt{x_1^2 + x_2^2}$.

\emph{(c)} $A = \left[ \begin{smallmatrix}
    1 & 0 \\
    0 & 0 \\
    0 & 0
\end{smallmatrix} \right] $ and $b = \left[ \begin{smallmatrix}
    0 \\
    0 \\
    0.3
\end{smallmatrix} \right] $ yields
$f(x) = - \sqrt{0.09 + x_1^2}$.

\emph{(d)} $A = \left[ \begin{smallmatrix}
    1 & 0 \\
    0 & 0
\end{smallmatrix} \right] $ and $b = \left[ \begin{smallmatrix}
    0 \\
    0
\end{smallmatrix} \right] $ yields 
$f(x) = -|x_1|$.
\end{example}

Notice that we have frequently rewritten $\|Ax +b\|$ in terms of a square root, as
in Examples~\ref{SOCFonR1} and \ref{4SOCFexample}.  We have also noted that 
$\|Ax +b\| = \| QA x + Qb\|$ for any orthogonal matrix $Q$, so many different choices
of $A$ and $b$ define the same SOCF.  
The next theorem describes a useful way to write an SOCF.

This theorem uses the Moore-Penrose Inverse of a matrix, also called the pseudoinverse, 
which has many interesting properties found in
\cite{MPIwiki}.
For example, $x = A^+b$ is the least squares solution to $Ax = b$, where $A^+ \in \RL^{n \times m}$ is the pseudoinverse of 
$A \in \RL^{m \times n}$.

The next theorem mentions the well-known fact that $A^T A$ is a \emph{positive semidefinite} matrix, which means that it is symmetric with non-negative eigenvalues.  A \emph{positive definite} matrix is a symmetric matrix with all positive eigenvalues.  If $A \in \RL^{m \times n}$ then $A^TA$ is positive definite if and only if the rank of $A$ is $n$.
\begin{theorem}
\label{MdeltaForm}
Every SOCF of the form $f(x) = c^T x + d - \|Ax+b\|$ is identically equal to
\begin{equation}
\label{socf}
f(x) = c^T x + d -\sqrt{\delta^2 + (x-x_*)^T M (x-x_*)},
\end{equation}
where $M = A^T A$ is positive semidefinite, $x_* = -A^+ b$, and $\delta = \| A x_* + b\|$.
\end{theorem}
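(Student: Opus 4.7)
The plan is to verify the claimed identity by squaring the norm $\|Ax+b\|$, splitting $Ax+b$ into a piece lying in $\operatorname{col}(A)$ and a piece orthogonal to $\operatorname{col}(A)$, and showing that the cross term in the expansion vanishes. More precisely, I would write
\[
Ax + b \;=\; A(x-x_*) + (Ax_* + b),
\]
and expand $\|Ax+b\|^2$ via the standard inner product identity. The first squared term becomes $(x-x_*)^T A^T A (x-x_*) = (x-x_*)^T M (x-x_*)$, the last squared term is $\|Ax_*+b\|^2 = \delta^2$ by definition, so the whole identity reduces to showing that the cross term $2(x-x_*)^T A^T (A x_* + b)$ is zero.

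The heart of the argument is therefore the identity $A^T(Ax_*+b) = 0$, where $x_* = -A^+ b$. I would establish this by invoking two defining properties of the Moore--Penrose inverse: $A A^+ A = A$ and the fact that $A A^+$ is symmetric (it is the orthogonal projection onto $\operatorname{col}(A)$). Transposing $AA^+A = A$ gives $A^T (AA^+)^T = A^T$; using symmetry of $AA^+$ this becomes $A^T A A^+ = A^T$. Then
\[
A^T(Ax_*+b) \;=\; -A^T A A^+ b + A^T b \;=\; -A^T b + A^T b \;=\; 0.
\]
Geometrically this is just the statement that $-Ax_* = AA^+ b$ is the orthogonal projection of $b$ onto $\operatorname{col}(A)$, so the residual $Ax_* + b$ is perpendicular to every column of $A$.

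The positive semidefiniteness of $M = A^T A$ is immediate, since $y^T M y = \|Ay\|^2 \geq 0$ for all $y \in \mathbb R^n$, so that part requires no real work. Combining these ingredients yields
\[
\|Ax+b\|^2 \;=\; (x-x_*)^T M (x-x_*) + \delta^2,
\]
and taking square roots gives the stated representation of $f$. I do not foresee any serious obstacle; the only subtlety is invoking the correct pseudoinverse identities, and this can be done cleanly either algebraically (via $AA^+A = A$ plus symmetry of $AA^+$) or via the one-line geometric remark about orthogonal projection onto $\operatorname{col}(A)$.
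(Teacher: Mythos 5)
Your proposal is correct and follows essentially the same route as the paper: both decompose $Ax+b$ as $A(x-x_*) + (Ax_*+b)$ and use orthogonality of the residual to the column space of $A$ to reduce $\|Ax+b\|^2$ to $(x-x_*)^TM(x-x_*) + \delta^2$. The only difference is that the paper cites the projection property of $x_* = -A^+b$ as well known and invokes the Pythagorean theorem, whereas you verify the vanishing of the cross term explicitly from the identities $AA^+A = A$ and the symmetry of $AA^+$ --- a welcome bit of added rigor, but not a different argument.
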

\begin{proof}
It is well-known that  the least squares solution to 
$A x = -b$ is $x_* = - A^+ b$, and that $A x_* = - A A^+ b$ is the
orthogonal projection of $-b$ onto $\text{col}(A)$.
That is, $A x_*$ is the point in $\text{col}(A)$ that is closest to $-b$.
Thus, the distance squared from $Ax$ to $-b$ is the distance squared from $Ax$ to $A x_*$ plus the distance squared from $A x_*$ to $-b$.
That is,
\begin{align*}
\| Ax + b \|^2 & = \| Ax -Ax_* \|^2 + \| A x_* + b\|^2 \\
               & = \| A(x-x_*) \|^2 + \| A x_* + b\|^2 \\
               & = (x - x_*)^T A^T A (x-x_*) + \| A x_* + b\|^2 \\
               & = (x - x_*)^T M (x-x_*) + \delta^2.
\end{align*}
The last equality uses the definitions of $M$ and $\delta$.  The result follows.
\end{proof}

\begin{remark}
For $A \in \RL^{m \times n}$, note that $\text{rank}(A) = n$ 
if and only if $A^T A \in \RL^{n \times n}$ is positive definite.  
The definition of $\delta$ in Theorem~\ref{MdeltaForm} makes it clear that 
$b \in \text{col}(A)$ if and only if $\delta = 0$.
    Therefore Theorem \ref{strictlyConcave} implies that an SOCF written in the form
    of Equation~(\ref{socf}) is strictly concave 
    if and only if $M$ is positive definite and $\delta > 0$.
\end{remark}
\begin{example}
\label{socf23example}
The left half of Figure \ref{socf-n2-m3} shows the critical point and one contour of the SOCF $f(x) = - \| Ax + b \|$, with 
$$A = \begin{bmatrix} 1&0\\-1&1\\0&2 \end{bmatrix}, \quad
b = \begin{bmatrix} 1\\1\\-1 \end{bmatrix}.
$$
The right part of the same Figure shows the geometry behind Theorem \ref{MdeltaForm}, which describes how to write
the function in the form $f(x) =  -\sqrt{\delta^2 + (x-x_*)^T M (x-x_*)}$.
Calculations show
$$
A^+  = \frac{1}{9}
\begin{bmatrix} 5 & -4 & 2 \\ 1 & 1 & 4
\end{bmatrix}, \quad
M = A^T A =  \begin{bmatrix} 2& -1\\-1 & 5 \end{bmatrix}, \quad
\textstyle \ x_* = \left (\frac 1 9, \frac 2 9 \right ), \quad \text{and} \
 \delta = \frac{5}{3}.
$$
The image of the square in $\RL^2$ under $A$ is the 
light blue parallelogram in $\RL^3$, shown on the right side of Figure~\ref{socf-n2-m3}.  The vectors in 
$\RL^3$ are the first (blue) and second (red) column of $A$.
These span the column space of $A$ in $\RL^3$.
The dot in $\RL^2$ is $x_*$, and the dot in the column space is $A x_* = -A A^+ b = \left (
1/9, 1/9,  4/9 \right )$, which
is the orthogonal projection of $-b$ onto $\text{col}(A)$.
The other dot in $\RL^3$ is $-b$.  The distance from $Ax_*$ to $-b$ is $\delta = 5/3$, so $f(x_*) = -5/3$.  
The ellipse on the left
is the contour of $f$ with height $-2$. 
The image of the ellipse under $A$ is the circle on the right, with is the set of points in the column space that are distance $2$ from $-b$. 
\end{example}

\begin{figure} 
\begin{center}
\begin{tikzpicture}
   \node at (0,0) {\includegraphics[width = 5cm]{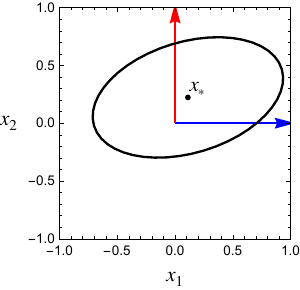}};
    \node at (7,0) {\includegraphics[width = 6cm]{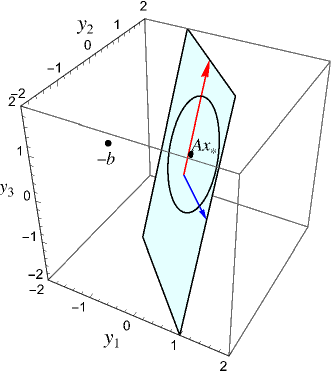}};
\end{tikzpicture}
\end{center}
\caption{The geometry of the second-order cone function 
$f(x) = -\|Ax +b\|$, with $A\in \RL^{3\times 2}$ and $b \in \RL^3$
defined in Example~\ref{socf23example}.
The function can also be written as $f(x) = - \sqrt{\delta^2 + (x-x_*)^T M (x-x_*)}$, were $M = A^T A$.
The maximum of $f$ is at $x_* = -A^+ b$, and the maximum value is $f(x_*) = -\delta$. 
The orthogonal projection of $-b$ onto the column space of $A$ is $A x_* = -AA^+b$.  The distance from $Ax_*$ to $-b$ is $\delta$.
One contour of $f$ is shown.  The image of this contour is a circle of
points in $\text{col}(B)$ that are equidistant from $-b$.
\label{socf-n2-m3}
}
\end{figure}

\comment{
\begin{figure}
\label{SOCFcontoursCone}
\centerline{\includegraphics[height=9.5cm]{SOCFcontoursCone.pdf}}
\caption{
\label{socf-n2-m3graph} 
\newold{}{
Graph and contour plot of two second-order cone functions 
with $c = 0$, $d = 0$.
Both functions have 
$M = \left[\begin{smallmatrix}2 &-1\\-1 & 5 \end{smallmatrix} \right ]$
and
$x_* = \left(1/9, 2/9
\right )$.  The function on the left has $\delta = 5/3$ and is the one shown in Figure~\ref{socf-n2-m3}.
The function on the right has $\delta = 0$.  
In each case, 
the contour with $f(x) = -2$ is a thick red curve, and the spacing between contours is $\Delta f = 1/6$.
See Example~\ref{socf23example}.
}
}
\end{figure}
}

The proof Theorem~\ref{uniqueParams}, to follow, is subtle.  While it is obvious that changing
one parameter will change the function $f$, it is difficult to eliminate the possibility
that more than one parameter can be changed while leaving the function unchanged.
For example, with the form of Equation~\eqref{socfOld}, the function $f$ is unchanged
when $A \mapsto QA$ and $b \mapsto Qb$ for an orthogonal matrix $Q$.
The strategy in the proof is to uniquely determine one parameter at a time 
in a specific order.

\begin{theorem}
\label{uniqueParams}
Assume an SOCF is written in the form of
Equation \eqref{socf}, and that the same SOCF is
written with possibly different parameters satisfying the same
requirements, so 
$$f(x) = c^T x + d - \sqrt{\delta^2 + (x-x_*)^T M (x-x^*)} = \tilde c^T x + \tilde d - 
\sqrt{\tilde \delta^2 + (x-\tilde x_*)^T \tilde M (x-\tilde x^*)}$$
for all $x$.
\begin{itemize}
    \item If $M = 0$ (the zero matrix), then $\tilde c = c$, $\tilde M = 0$, $\tilde d - \tilde \delta = d - \delta$, and $\tilde x_*$ arbitrary, and
    \item if $M \neq 0$, then $\tilde c = c$, $\tilde d = d$, $\tilde \delta = \delta$, $\tilde M = M$, and $M \tilde x_* = M x_*$.
\end{itemize}
As a consequence, the parameterization of
an SOCF in the form of Equation~\eqref{socf}
is unique if and only if $M$ is positive definite.
\end{theorem}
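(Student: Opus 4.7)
The plan is to recover each of the five parameters $c, M, Mx_*, d, \delta$ one at a time from the function $f$ itself, in an order that forces each to match its tilde counterpart once the earlier ones have been pinned down. Writing $g(x) := \sqrt{\delta^2 + (x-x_*)^T M (x-x_*)}$ so that $f(x) = c^Tx + d - g(x)$, we first extract $c$ and $M$ from the asymptotic behavior of $f$ along rays. Expanding $g(tv)^2 = \delta^2 + t^2(v^TMv) - 2t(v^TMx_*) + x_*^TMx_*$ and taking the dominant term of the square root as $|t|\to\infty$ gives $g(tv) = |t|\sqrt{v^TMv} + O(1)$ when $v^TMv>0$, and $g(tv)$ constant in $t$ when $v^TMv=0$. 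In either case,
\[
\lim_{t\to+\infty}\frac{f(tv)}{t} = c^Tv - \sqrt{v^TMv}, \qquad \lim_{t\to-\infty}\frac{f(tv)}{t} = c^Tv + \sqrt{v^TMv}.
\]
Averaging the two limits recovers $c^Tv$ from $f$ for every $v \in \RL^n$, and differencing them recovers $v^TMv$; this forces $\tilde c = c$ and, since both matrices are symmetric, $\tilde M = M$.

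Next we recover $Mx_*$ by setting $\phi(x) := f(x) - c^T x = d - g(x)$. Because $M$ is positive semidefinite, $\phi$ attains its supremum $d-\delta$ exactly on the affine subspace $x_* + \ker M$, which is an invariant of $f$; comparing the two representations gives $\tilde x_* + \ker M = x_* + \ker M$, equivalently $M\tilde x_* = Mx_*$ (so $\tilde x_* = x_*$ when $M$ is positive definite and $\tilde x_*$ is arbitrary when $M = 0$). To finish, we split on whether $M = 0$. If $M = 0$ then $\phi \equiv d-\delta$ is constant, so only $\tilde d - \tilde \delta = d-\delta$ is forced, as claimed. If $M \neq 0$, we pick any $u$ with $\alpha := u^T M u > 0$ and consider $h(t) := \phi(x_* + tu) = d - \sqrt{\delta^2 + t^2\alpha}$, which is well-defined because any other representative of $x_* + \ker M$ yields the same one-variable function (the cross term $u^TMv$ vanishes for $v \in \ker M$ by symmetry of $M$). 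The identity $(d - h(1))^2 - (d - h(0))^2 = \alpha$ is linear in $d$ with nonzero coefficient $2(h(0) - h(1)) = 2(\sqrt{\delta^2+\alpha}-\delta) > 0$, so $d$ is uniquely determined, and then $\delta = d - h(0) \ge 0$ follows. Combining the three steps gives all the stated equalities, and the final consequence---that the parameterization is unique iff $M$ is positive definite---is immediate.

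The main obstacle is the first step: separating the linear contribution $c^Tx$ from the square-root term $g(x)$, because on each ray $g$ is itself asymptotically affine with slope $\pm\sqrt{v^TMv}$, so $c$ cannot be read off from a single gradient or a single limit. The asymmetry between the $t\to+\infty$ and $t\to-\infty$ limits is exactly what lets $c$ and $\sqrt{v^TMv}$ be disentangled. Once $c$ and $M$ are in hand, the remaining information about $Mx_*$, $d$, and $\delta$ can be read off cleanly from one-dimensional restrictions of $\phi$, and the case $M = 0$ appears as a natural degeneracy in which the square root collapses to a constant and only the single combination $d - \delta$ survives.
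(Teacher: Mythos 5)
Your proposal is correct. The first (and hardest) step is essentially the paper's own argument: the paper computes the slant asymptote of $f(vt)$ as $t\to\infty$ and packages the two one-sided behaviors as $\tfrac12\bigl(f(vt)\mp f(-vt)\bigr)$, which is exactly your averaging/differencing of the limits $\lim_{t\to\pm\infty}f(tv)/t$ to disentangle $c^Tv$ from $\sqrt{v^TMv}$; one small point you leave implicit is that when $v^TMv=0$ you need $Mv=0$ (true since $M$ is positive semidefinite) to kill the cross term and make $g(tv)$ constant --- the paper states this fact explicitly at the outset. Where you genuinely diverge is in recovering $Mx_*$, $d$, and $\delta$: the paper reads $d$ off the constant term of the asymptote of $\tfrac12\bigl(f(vt)+f(-vt)\bigr)$ along any direction with $Mv\neq0$, and then equates the two quadratic expressions under the square root to extract $M\tilde x_*=Mx_*$ and $\tilde\delta=\delta$; you instead identify the affine subspace $x_*+\ker M$ as the argmax set of $\phi=f-c^Tx$ and then determine $d$ and $\delta$ from two values of a one-dimensional slice via the identity $(d-h(1))^2-(d-h(0))^2=\alpha$. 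Both routes are sound. Yours has the aesthetic advantage that the argmax characterization makes the exact extent of the non-uniqueness of $x_*$ (a coset of $\ker M$) completely transparent, and it cleanly subsumes the $M=0$ case as the degenerate situation where $\phi$ is constant; the paper's route gets $d$ essentially for free from asymptotics it has already computed and avoids introducing the maximizer set.
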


\begin{proof}
Recall that $M, \tilde M$ are positive semidefinite.
It follows that $Mv = 0$ if and only if $v^T M v = 0$.
Also, recall that $\delta, \tilde \delta$ are non-negative real numbers.

For nonzero $v \in \Rn$ and $t \in [0, \infty)$,
we consider the function
$f(v t)$ and its asymptotic behavior as $t \to \infty$.
If $v^T M v = 0$, then $f(v t) = c^Tv \, t + d - \sqrt{\delta^2 + x_*^T M x_*}$.
If $v^T M v \neq 0$, then
\begin{align}
\nonumber
f(vt) & = c^T v \, t +d - \sqrt{\delta^2 + (v t-x_*)^T M (vt-x_*)} \\
\nonumber & = c^T v \, t +d - \sqrt{v^TMv \, t^2 - 2 v^T M x_* t + x_*^TMx_* + \delta^2  } \\
\nonumber
& = c^T v \, t + d - \sqrt{v^TMv}\, t \, \sqrt{ 1  + \frac {-2 v^T M x_* \, t + x_*^TMx_* + \delta^2}{v^TMv \, t^2} } \\
\label{slant}
&= \left ( c^T v - \sqrt{v^TMv} \right ) t + d  + \frac{v^T M x_*}{\sqrt{v^TMv}} + O(1/t) \text{ as } t \to \infty.
\end{align}
The third equation uses the fact that $t \geq 0$, and the 
fourth equation uses the Taylor series $\sqrt{1 + \eps} = 1 + \eps/2 + O(\eps^2)$
as $\eps \to 0$.  
The fourth equation describes the slant asymptote of the graph of $f(vt)$, and is crucial for the remainder of the proof.

For all $v \neq 0$, Equation~\eqref{slant} implies that
\begin{equation}
\label{fvtDif}
\frac{f(v t) - f(-v t)}{2} = \begin{cases}
    c^T v \, t  & \mbox { if } v^T Mv = 0 \\
    c^T v \, t  + \frac{v^T M x_*}{v^T M v} + O(1/t) & \mbox { if } v^T Mv \neq 0 .
\end{cases}
\end{equation}
A similar expression  where $c$ is replaced by $\tilde c$ holds.
If $v^TMv = 0$, then $\tilde c^Tv = c^Tv$.
If $v^TMv \neq 0$, then
the slope of the slant asymptote is the same for both sets of parameters, so again $\tilde c^T v = c^T v$.  This holds for all $v$, so $\tilde c = c$.

For all $v \neq 0$, Equation~\eqref{slant} implies that
\begin{equation}
\label{fvtSum}
\frac{f(v t) + f(-v t)}{2} = \begin{cases}
    d - \sqrt{\delta^2 + x_*^T M x_*}  & \mbox { if } v^T Mv = 0 \\
    d -\sqrt{v^T M v} \, t   + O(1/t) & \mbox { if } v^T Mv \neq 0 ,
\end{cases}
\end{equation}
along with a similar expression where $d$ is replaced by $\tilde d$, etc.
If $\tilde M \neq M$, then there is some vector 
$v$ such that 
$v^T \tilde M v \neq v^T M v$.  
This leads to a contradiction since the slope
of the slant asymptote 
in Equation~\eqref{fvtSum} would be different.  Thus, $\tilde M = M$.

Assume $M = 0$.
Then $f(x) = c^T x + d - \delta = c^Tx + \tilde d - \tilde \delta$, since $\tilde c = c$, and $\tilde M = M = 0$.
Thus, $\tilde d - \tilde \delta = d - \delta$.

Assume $M \neq 0$.  Then there exists $v \in \Rn$ that satisfies $Mv \neq 0$.  
Using Equation~\eqref{fvtSum} with $v^T \tilde M v = v^T M v \neq 0$,
we find that $\tilde d = d$.
At this point we conclude, from the equality of the two expressions for $f$, that
$\delta^2 + (x-x_*)^T M (x-x_*) = \tilde \delta^2 + (x-\tilde x_*)^T M (x-\tilde x_*)^T$ for all $x$.
Expanding the quadratic term and canceling like terms, we find that
$\delta^2 - 2 x^T M x_* = \tilde \delta^2 - 2 x^T M \tilde x_*$ for all $x$.
Thus $\tilde \delta = \delta$ and $M \tilde x_* = M  x_*$.  

Now we show that the parameterization of $f$ is unique if and only if $M$ is positive definite.
If $M$ is not positive definite there exist $x_* \neq \tilde x_*$ such that 
$M \tilde x_* = M  x_*$.
%
If $M$ is positive definite then $M \neq 0$ and $M$ is invertible, so $\tilde x_* = x_*$ and all of the parameters are unique.
\comment{ 
\note{Old Proof follows:
Recall that $M$ is positive semidefinite.
Assume $M$ is not positive definite.  
Then $M$ is singular, and the null space of $M$ is not $\{0\}$.
Thus $x_*$ is not unique, since $(x-x_*)^TM(x- x_*) = (x-{\tilde x}_*)^TM(x- \tilde x_*)$ if 
$x_*-\tilde x_*$ is in the null space of $M$.
Thus, the parameterization of $f$ is not unique.

Assume $M$ is positive definite.  
Let $\lam_1 > 0$ be the smallest eigenvalue of $M$, so
$x^T M x \geq \lam_1 \|x\|^2$ for all $x$.
Thus, the Taylor expansion $\sqrt{1 + \eps} = 1 + \eps/2 + O(\eps^2)$ 
as $\eps \to 0$ implies that 
\begin{align*}
f(x) & = c^T x +d - \sqrt{\delta^2 + (x-x_*)^T M (x-x_*)} \\
& = c^T x +d - \sqrt{x^TMx - 2 x^T M x_* + x_*^TMx_* + \delta^2  } \\
& = c^T x + d - \sqrt{x^TMx} \, \sqrt{ 1  + \frac {-2 x^T M x_* + x_*^TMx_* + \delta^2}{x^TMx} } \\
&= c^T x + d - \sqrt{x^TMx} + \frac{x^T M x_*}{\sqrt{x^TMx}} + O(\|x\|^{-1}) \text{ as } \|x\| \to \infty.
\end{align*}
Define functions $\alpha, \beta: \Rn \setminus \{0\} \to\RL$ as
$$
\alpha(v) = c^Tv - \sqrt{v^TMv} \mbox{ and } \beta(v) = 
d + \frac{v^T M x_*}{\sqrt{v^TMv}} .
$$
For any $v \neq 0$ the function $f$ satisfies
$f(v t) = \alpha(v) t + \beta(v) + O(1/t) \text{ as } t \to \infty$.
This states that the function $f$, restricted to a ray from the origin,
has a slant asymptote.

We can easily determine the unique values of the parameters $c$ and $d$ 
for a given SOCF $f$ from the values of $\alpha$ and $\beta$ evaluated
at standard basis vectors $e_i \in \Rn$.  The components of $c$ and the scalar $d$ are
$$
 e_i^T c = \frac{\alpha(e_i) + \alpha(-e_i)} 2, \quad
d = \frac {\beta(e_1) + \beta(-e_1)} 2.
$$
The matrix $M$ is uniquely determined by the $\alpha$ function.  
Let $0 < \lam_1 \leq \lam_2 \cdots \leq \lam_n$ 
be the eigenvalues of $M$.  Then 
$$
v^TMv = \frac {(\alpha(v) + \alpha(-v))^2}{4}, \text{ so }
\lam_n =\max_{v \neq 0} \frac {(\alpha(v) + \alpha(-v))^2}{4 v^T v}.
$$
Any maximizing vector is an eigenvector corresponding to $\lam_n$.
Then restrict to the subspace orthogonal to that eigenvector, and compute
$\lam_{n-1}$ and a corresponding eigenvector.
In this way all the eigenvalues of $M$ and a complete set of 
orthogonal eigenvectors can be computed, and thus the matrix $M$ is
uniquely determined.

After $M$ is known, we can compute $x_*$, since
$$
\beta(M^{-1} e_i) = d + \frac{e_i^T x_*}{\sqrt{e_i^T M^{-1} e_i}}
$$
and thus the components of $x_*$ are
$$
e_i^T x_* = \frac{\beta(M^{-1} e_i)  - \beta(-M^{-1} e_i)}{2} \sqrt{e_i^T M^{-1} e_i }.$$
The value of $f$ at any point determines the final parameter, $\delta$. The most convenient input is $x_*$, in which case
$\delta = c^T x_*  + d - f(x_*)$.
}
} 
\end{proof}
\begin{example}
Let $f(x_1, x_2) = -\sqrt{4 + (x_1-1)^2}$ 
be the SOCF on $\RL^2$ defined by 
$c = 0$, $d = 0$, $\delta = 2$,
$M = \left [ \begin{smallmatrix} 1 & 0 \\ 0 & 0 \end{smallmatrix} \right ]$, 
and $x_* = (1,0)$.
Note that $M$ is not positive definite.
The null space of $M$ is $\text{span}\{(0,1)\}$.
The parameterization is not unique since any $x_* \in \{(1, a) \mid a \in \RL\}$
yields the same SOCF.  
\end{example}
While many choices of $A$ and $b$ in the form of Equation \eqref{socfOld} yield the same function, there is a canonical
choice for $A$ and $b$ starting with the function in the form of Equation \eqref{socf}.
Recall that a positive semidefinite matrix $M$ has a unique positive semidefinite square root, denoted $M^{1/2}$.  

\begin{theorem}
Let $M \in \RL^{n\times n}$ be positive semidefinite, $x_* \in \Rn$, and $\delta \in \RL$.
Then
$\delta^2 + (x-x_*)^T M (x-x_*)= \| A x + b \|^2$
for 
$$
A = \begin{bmatrix} M^{1/2} \\ 0  \end{bmatrix},\text{ and } b = \begin{bmatrix} - M^{1/2} x_* \\ \delta  \end{bmatrix}.
$$
The last row of $A \in \RL^{(n+1) \times n}$ is all 0s, and the last component of $b\in \RL^{n+1}$ is $\delta$.
\end{theorem}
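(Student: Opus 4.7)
The plan is to verify the identity by direct block computation, with the key algebraic input being the defining property of the positive semidefinite square root $M^{1/2}$.

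First I would compute $Ax + b$ using the block structure: since $A$ stacks $M^{1/2}$ on top of a zero row and $b$ stacks $-M^{1/2} x_*$ on top of $\delta$, we get
\[
Ax + b = \begin{bmatrix} M^{1/2}(x - x_*) \\ \delta \end{bmatrix} \in \RL^{n+1}.
\]
Taking the Euclidean norm squared of a block vector simply adds the squared norms of the blocks, so
\[
\|Ax+b\|^2 = \|M^{1/2}(x-x_*)\|^2 + \delta^2.
\]

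Next I would rewrite the first term as a quadratic form. Because $M$ is positive semidefinite, $M^{1/2}$ is symmetric, so $(M^{1/2})^T M^{1/2} = M^{1/2} M^{1/2} = M$, and hence
\[
\|M^{1/2}(x-x_*)\|^2 = (x-x_*)^T (M^{1/2})^T M^{1/2} (x-x_*) = (x-x_*)^T M (x-x_*).
\]
Combining the two displays gives the desired identity.

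There is essentially no obstacle here: the only fact beyond direct computation is that $M^{1/2}$ is symmetric with $(M^{1/2})^2 = M$, which is the defining characterization of the positive semidefinite square root referenced immediately before the theorem statement.
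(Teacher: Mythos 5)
Your proof is correct and follows essentially the same route as the paper: compute $Ax+b$ blockwise to get $\begin{bmatrix} M^{1/2}(x-x_*) \\ \delta \end{bmatrix}$, then use the symmetry of $M^{1/2}$ and $M^{1/2}M^{1/2}=M$ to identify the squared norm with $\delta^2+(x-x_*)^T M (x-x_*)$. No gaps.
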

\begin{proof}
Note that $M^{1/2}$ is symmetric, and
$$
A x + b = \begin{bmatrix} M^{1/2} x \\ 0  \end{bmatrix} + \begin{bmatrix} -M^{1/2} x_* \\ \delta  \end{bmatrix} =
\begin{bmatrix} M^{1/2} (x-x_*) \\ \delta  \end{bmatrix}.
$$ 
Thus $\| A x + b \|^2 = \delta ^2 + (x-x_*)^T M^{1/2} M^{1/2}(x-x_*) = 
\delta^2 + (x-x_*)^T M (x-x^*)$.
\end{proof}
\begin{remark}
It follows from this theorem that any SOCF can be defined in the form of Equation \eqref{socfOld} with $A \in \RL^{(n+1)\times n}$.  
While $A$ is an $m\times n$ matrix with any $m$, 
using $m > n+1$ is
never needed.
\end{remark}
Recall that any non-constant SOCF is not bounded below, since it is concave.  
We give necessary and sufficient conditions for an SOCF to be bounded above with two theorems.  
The next theorem assumes that $M$ is positive definite, 
and the case where $M$ is positive semidefinite is handled in Theorem~\ref{fBoundedSemiDef}.
\begin{theorem}
\label{fBoundedDef}
The SOCF $f: \Rn \to \RL$ written in the form (\ref{socf}),
$$f(x) = c^T x + d - \sqrt{\delta^2 + (x-x_*)^T M (x-x_*)}, $$ 
with $M$ positive definite, is bounded above if and only if $c^T M^{-1}c \leq 1$.  More specifically,
\begin{enumerate} 
\item if $c^T M^{-1}c < 1$ and $\delta = 0$, 
then $x_*$ is the unique critical point of $f$, 
and $f(x_*) = c^T x_* + d$ is the global maximum value of $f$,

\item if $c^T M^{-1} c =  1$ and $\delta = 0$, 
then every point in the ray 
$\{x_* + t M^{-1}c \mid t \geq 0\}$ is a critical
point of $f$, on which $f$ attains its maximum value of $f(x_*) = c^T x_* + d$, 

\item if $c^T M^{-1} c >  1$ and $\delta = 0$, 
then $x_*$ is the unique critical point of $f$, but $f$ is not bounded above, 

\item if $c^T M^{-1}c < 1$ and $\delta > 0$, 
then $\displaystyle x_{cp} := x_* + \frac {\delta M^{-1} c }{\sqrt{1 - c^T M^{-1} c}}$ is the unique critical point of $f$, and
$f(x_{cp}) = c^T x_{cp} + d - \delta \sqrt{1-c^TM^{-1}c}$ is the global maximum value of $f$,

\item if $c^T M^{-1} c = 1$ and $\delta  > 0$, 
then $f$ has no critical points and $f$ does not have a global maximum value, but $f$ is bounded above by $c^T x_* + d$, and

\item if $c^T M^{-1} c > 1$ and $\delta  > 0$, 
then $f$ has no critical points and $f$ is not bounded above.
\end{enumerate}
\end{theorem}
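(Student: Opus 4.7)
The plan is to locate the critical points of $f$ directly and then combine this with an asymptotic analysis along rays from $x_*$.  Wherever $g(x) := \sqrt{\delta^2 + (x-x_*)^T M(x-x_*)} > 0$ (that is, everywhere when $\delta > 0$, and on $\Rn \setminus \{x_*\}$ when $\delta = 0$), $f$ is smooth with $\nabla f(x) = c - M(x-x_*)/g(x)$.  Setting $\nabla f = 0$ and inverting $M$ gives the critical-point equation $x - x_* = g(x)\, M^{-1} c$.  Applying the $M$-quadratic form to both sides and using the definition of $g$ reduces this vector equation to the scalar identity
$$g(x)^2 \bigl(1 - c^T M^{-1} c\bigr) = \delta^2,$$
which I would solve in each of the six sub-cases.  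For instance, when $c^T M^{-1} c < 1$ and $\delta > 0$ this pins down $g(x)$ and hence the unique critical point $x_{cp}$; when $c^T M^{-1} c = 1$ and $\delta = 0$ the identity is vacuous and I would instead verify directly that the parameterized ray $\{x_* + t M^{-1} c : t \geq 0\}$ is precisely the zero set of $\nabla f$ in the smooth domain; the remaining cases yield either no interior critical points or only the non-smooth corner $x_*$ as a candidate.

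For boundedness, I would study $h_v(t) := f(x_* + tv)$ for fixed $v \neq 0$ and $t \geq 0$.  Since $M$ is positive definite, $v^T M v > 0$, and a Taylor expansion of the square root gives
$$h_v(t) = \bigl(c^T v - \sqrt{v^T M v}\bigr)\,t + c^T x_* + d + O(1/t) \quad \text{as } t \to \infty.$$
Cauchy--Schwarz in the $M$-weighted inner product yields $c^T v \leq \sqrt{c^T M^{-1} c}\,\sqrt{v^T M v}$, with equality iff $v$ is a positive scalar multiple of $M^{-1} c$.  Hence if $c^T M^{-1} c > 1$, taking $v = M^{-1} c$ produces a ray on which $h_v \to \infty$, handling cases 3 and 6.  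If $c^T M^{-1} c \leq 1$, substituting $y = M^{1/2}(x-x_*)$ and applying Cauchy--Schwarz to the linear term yields the global upper bound $f(x) \leq c^T x_* + d + \sqrt{c^T M^{-1} c}\,\|y\| - \sqrt{\delta^2 + \|y\|^2}$, whose right side depends only on $\|y\|$ and is easily seen to be bounded on $[0,\infty)$.

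Since $f$ is concave by Theorem~\ref{strictlyConcave}, any critical point is a global maximum, and strict concavity (when $\delta > 0$) forces uniqueness; the explicit maximum values in cases 1, 2 and 4 then follow by direct evaluation at $x_*$ or $x_{cp}$.  The main obstacle is the boundary case $c^T M^{-1} c = 1$ with $\delta > 0$ (case 5): there is no critical point, yet $f$ must be shown to be bounded above by $c^T x_* + d$.  For this I would use the global upper bound just established, combined with the observation that along $v = M^{-1} c$ one has $h_v(t) = c^T x_* + d + t - \sqrt{\delta^2 + t^2}$, which is strictly increasing and approaches $c^T x_* + d$ from below.  Thus the supremum equals $c^T x_* + d$ but is never attained, matching the statement exactly; case 2 then requires the analogous direct verification that $f$ is constant on the critical ray and equals this supremum.
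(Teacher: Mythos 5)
Your proof is correct, and while its overall architecture (locate critical points via the gradient equation, then control boundedness along rays through $x_*$) mirrors the paper's, the key technical tool in the boundedness half is genuinely different. The paper first treats $\delta=0$, where $f$ restricted to each ray is linear, and finds the maximal ray-slope by running Lagrange multipliers on the ellipsoid $\{x : x^TMx=1\}$; it then handles $\delta>0$ by a comparison argument, noting that the $\delta>0$ function lies below the $\delta=0$ function with a gap tending to $0$ at infinity. You instead substitute $y=M^{1/2}(x-x_*)$ and apply Cauchy--Schwarz in the $M$-weighted inner product to get the uniform pointwise bound $f(x)\le c^Tx_*+d+\sqrt{c^TM^{-1}c}\,\|y\|-\sqrt{\delta^2+\|y\|^2}$, which is more elementary, identifies the extremal direction $M^{-1}c$ for free, and makes the delicate case 5 (bounded above by $c^Tx_*+d$, supremum not attained) immediate: the bound gives strict inequality when $\delta>0$, and the explicit formula $h_v(t)=c^Tx_*+d+t-\sqrt{\delta^2+t^2}$ along $v=M^{-1}c$ shows the supremum is approached. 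Your critical-point analysis, reducing $\nabla f=0$ to the scalar identity $g(x)^2(1-c^TM^{-1}c)=\delta^2$, is essentially the paper's computation ($x_{cp}=\alpha M^{-1}c$ with $\alpha=\sqrt{\delta^2+\alpha^2c^TM^{-1}c}$) but organized to cover the $\delta=0$ smooth locus as well, which cleanly explains why the critical set degenerates to a ray exactly when $c^TM^{-1}c=1$. The only point to tighten is the equality case of Cauchy--Schwarz, which as stated silently assumes $c\neq 0$; this is harmless since the equality case is only invoked when $c^TM^{-1}c\ge 1$.
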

\begin{proof}
To simplify the proof, we will analyze the SOCF 
$\tf(x) := c^T x - \sqrt{\delta^2 + x^T M x}$.
Note that 
$f(x+ x_*) = \tf(x) + (d + c^T x_*)$, and $\tf(x - x_*) = f(x) -d + c^T x_*$,
so we can easily relate the critical points, and the upper bounds, of $f$ and $\tf$.

{\bf Case I:} $\delta = 0$.
In this case $\tf(x) = c^T x - \sqrt{x^TMx}$. 
Let $v$ be any nonzero vector in $\Rn$.  Since $v^T Mv > 0$, the function $t \mapsto \tf(vt) = (c^T v)t - \sqrt{v^TM v} \, |t|$ is not differentiable at $t = 0$.  Thus  $0 \in \RL^n$ is a critical point of $\tf$ at which $\tf$ is not differentiable, and $f$ has a critical point at $x_*$.
To determine if $\tf$ has a global maximum at 0, define $g_v: [0, \infty) \to \RL$ by $g_v(t) = \tf(v t) = (c^T v)t - t \sqrt{v^T M v}$.
Note that $g_v$ is a linear function giving the value of $\tf$ 
along a ray starting at $0 \in \Rn$ with the direction vector $v$.
The function $\tf$ is bounded above if and only if the slope of $g_v$ is non-positive 
for all directions $v$.

Let $\mathcal E = \{ x \in \Rn \mid x^T M x = 1\}$. 
Note that $\mathcal E$ is an ellipsoid centered at $0$, since $M$ is positive definite.  
Furthermore, $g_v(0) = 0$, so $\tf$ is bounded above if and only if
the maximum value of $\tf$, restricted to $\mathcal E$, is non-positive.  
We compute this maximum value using the method
of Lagrange multipliers.  The extreme values of $\tf$ restricted to $\mathcal E$ occur at 
places where $\nabla (c^T x) = \lambda \nabla \sqrt{x^T M x}$.
This is equivalent to $c = \lambda Mx/\sqrt{x^T M x}$, or $\lambda M x = c$ since $x^T M x = 1$ on $\mathcal E$.
Thus, the extrema of $\tf$ are at $x = \frac 1 \lam M^{-1}c$, where $\lambda$ is determined by $x^T Mx = 1$.  Thus
$\frac 1 {\lam^2} c^T M^{-1} M M c = 1$, so $\lam^2 = c^T M^{-1} c$. 
There are two antipodal points on 
$\mathcal E$, $x_{\pm} = \frac{\pm 1}{ \sqrt{c^T M^{-1} c} } M^{-1} c$, 
with extreme values of $\tf$ restricted to $\mathcal E$.
We see that $\tf(x_{\pm}) = c^T x_{\pm} - 1 = \pm \sqrt{c^T M^{-1}c} - 1$.  
The maximum value of $\tf$ restricted to $\mathcal E$ is $\sqrt{c^T M^{-1}c} - 1$,
which occurs at $x_+$.
Thus, the maximum slope of $g_v$ occurs when $v$ is a positive scalar multiple 
of $M^{-1} c$, and that maximum slope has the same sign as $\sqrt{c^T M^{-1}c} - 1$,  
Thus, $\tf$ is bounded above if and only if $c^T M^{-1} c \leq 1$.

If $c^T M^{-1}c < 1$, then $0$ is the unique critical point of $\tf$, 
and $\tf(0) = 0$ is the global maximum value of $\tf$. 
Thus $x_*$ is the unique critical point of $f$, 
and $f(x_*) = c^T x_* + d$ is the global maximum value of $f$.
This proves part 1 in the theorem.
If $c^T M^{-1} c = 1$, then the linear function $g_v$ has slope $0$ when $v = M^{-1}c$, and 
$\tf$ achieves its maximum value of 0 
at each point on the ray from $0$ through $M^{-1}c$.
Every point in this ray, 
$\mathcal C = \{t M^{-1} c \mid t \geq 0\}$, is a critical point. 
Translating this result to the original $f$ proves part 2.
If $c^T M^{-1}c > 1$, then the slope of $g_v$ is positive for some $v$.  Thus $\tf$ has an isolated critical point at $0$, but $\tf$ is not bounded above. This proves part 3 of the theorem.

{\bf Case II:} $\delta > 0$.
The gradient of $\tf$ at $x$ is 
%
$$
\nabla \tf (x) =  c - \frac{M x}{\sqrt{\delta^2 + x^T M x}}.
$$
In this case $\tf$ is smooth, and
the critical points of $\tf$ are solutions to $\nabla \tf (x) = 0$.
Since $M$ is positive definite, $\tf$ is strictly concave 
by Theorem~\ref{strictlyConcave} and $\tf$ has at most one critical point.  
If $\tf$ has a critical point
then it must be a global maximum and hence $\tf$ is bounded above.
Denote the critical point of $\tf$ as $x_{cp}$, if it exists, satisfies 
$Mx_{cp} = c \sqrt{\delta^2 + {x_{cp}}^T M x_{cp}}$. It follows that the critical point is a scalar multiple of $M^{-1}c$.  
Let $x_{cp}= \alpha M^{-1} c$. The scalar $\alpha$ satisfies 
$\alpha = \sqrt{\delta^2 + \alpha^2(M^{-1}c)^T M (M^{-1} c)} = \sqrt{\delta^2 + \alpha^2 c^T M^{-1} c}$.
If $c^T M^{-1} c < 1$ the unique solution is
$\alpha_s = \delta / \sqrt{1 - c^T M^{-1} c}$, 
and if $c^T M^{-1} c \geq 1$ there are no solutions for $\alpha$.  
Thus if $c^T M^{-1} c < 1$ the function $\tf$ has the critical point $\alpha_s M^{-1}c$,
and the critical point of $f$ is $x_{cp} = x_* + \alpha_s M^{-1}c$, 
and a calculation of 
$f(x_{cp})$ completes the proof of part 4.

We have already seen that $\tf$, and therefore $f$, 
has no critical points when $c^T M^{-1}c \geq 1$.  
The results about boundedness and upper bounds need the following asymptotic analysis.
When $\|x\|$ is large
then $x^T M x$ is large of order $O(\|x\|^2)$ because $M$ is positive definite, and
$\sqrt{\delta^2 + x^T M x} = \sqrt{x^T M x}\sqrt{1 + \delta^2/(x^T M x)} > \sqrt{x^T M x}$.
The Taylor expansion $\sqrt{1 + \eps} = 1 + \eps/2 + O(\eps^2)$ shows that
$\sqrt{\delta^2 + x^T M x }= \sqrt{x^T M x}\big (1 + \delta^2/(2 x^T M x) + O(\|x\|^{-4} \big )$.
Thus, an SOCF with $\delta >0$ is always less than the corresponding SOCF with $\delta = 0$,
and the difference approaches 0 as $\|x\| \to \infty$.
Parts 5 and 6 of the theorem follow.
\end{proof}

\begin{figure}
\begin{center}
    \begin{tikzpicture} 
        \node at (0.2, 2.3) {$c^T M^{-1} c < 1$};
        \node at (4.7, 2.3) {$c^T M^{-1} c = 1$};
        \node at (9.2, 2.3) {$c^T M^{-1} c > 1$};
        \node at (-2.6, 0.17) {$\delta = 0$};
        \node at (-2.6, -4.33) {$\delta = 1$};
        \node at (0,0) {\includegraphics[width = 4cm]{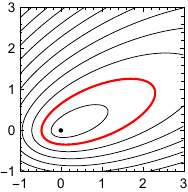}};
        \node at (4.5,0) {\includegraphics[width = 4cm]{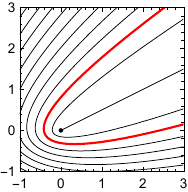}};
        \node at (9,0) {\includegraphics[width = 4cm]{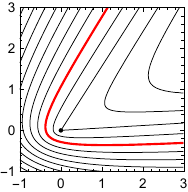}};
        \node at (0,-4.5) {\includegraphics[width = 4cm]{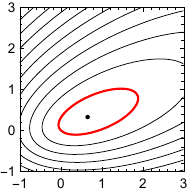}};
        \node at (4.5,-4.5) {\includegraphics[width = 4cm]{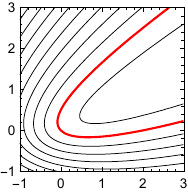}};
        \node at (9,-4.5) {\includegraphics[width = 4cm]{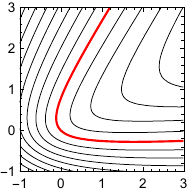}};
    \end{tikzpicture}
\end{center}
\caption{
\label{SOCF6}
Contour plots of six different second-order cone functions
defined in Example~\ref{ex:SOCF6}.
All functions have the same positive definite matrix $M$.
The parameters $c$ and $\delta$ are chosen to illustrate Theorem~\ref{fBoundedDef}, which says that 
$f$ is bounded above if and only if $c^T M^{-1}c \leq 1$.
The six parts of Theorem~\ref{fBoundedDef} correspond to the six contour plots.
The contour with $f(x) = -1$ is a thick red curve, and
the spacing between contours is
$\Delta f = 0.5$.
}
\end{figure}
\begin{example}
\label{ex:SOCF6}
    Figure \ref{SOCF6} shows the contour diagrams of 6 SOCFs of the form
    $f(x)= c^Tx - \sqrt{\delta^2 + x^T M x},$
    with $M = \left[\begin{smallmatrix}2 &-1\\-1 & 5 \end{smallmatrix} \right ]$. 
    The eigenvalues of $M$ are $(7 \pm \sqrt{13})/2$, so
    $M$ is positive definite and Theorem~\ref{fBoundedDef} applies with the parameters $d = 0$ and $x_* = (0, 0)$.
    A calculation shows that $M^{-1} = \frac 1 9 \left[\begin{smallmatrix}5 &1\\1 & 2 \end{smallmatrix} \right ]$.
    The other parameters are $\delta = 0$ in the top row, $\delta = 1$ in the bottom row,
    $c = (0.7, 0.7)$ in the left column, $c = (1,1)$ in the middle column, and $c = (1.3, 1.3)$ in the right column.
    These values of $c$ give $c^TM^{-1}c = 0.7^2, 1$, and $1.3^2$, respectively.
    In the top row $(0, 0)$ is always a critical point and $f(0,0) = 0$.
    In the top middle figure
    the contour with height 0
    is the ray in the direction $M^{-1}c = \big (\frac 2 3, \frac 1 3 \big )$.
    In the 
    bottom left figure
    we find that 
    $x_{cp} = \big (\frac {1.4} 3, \frac {0.7} 3 \big )/\sqrt{1-.7^2}\approx (0.65, 0.33)$ and $f(x_{cp}) = - \sqrt{1-.7^2} \approx -.71$. 
    In the bottom middle figure $f$ is bounded above by 0.
\end{example}

\begin{theorem}
\label{fBoundedSemiDef}
The SOCF $f: \Rn \to \RL$ written in the form (\ref{socf}), $$f(x) = c^T x + d - \sqrt{\delta^2 + (x-x_*)^T M (x-x_*)},$$ 
with $M$ positive semidefinite
is bounded above if and only if $c \in \text{col}(M)$ and $c^T M^+ c \leq 1$.
\end{theorem}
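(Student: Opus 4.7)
The plan is to reduce the semidefinite case to the positive-definite case of Theorem~\ref{fBoundedDef} by restricting $f$ to the affine subspace $x_* + \text{col}(M)$ and accounting for the orthogonal directions separately. Since $M$ is symmetric positive semidefinite, $\RL^n$ decomposes orthogonally as $V \oplus V^\perp$ with $V = \text{col}(M)$ and $V^\perp = \text{null}(M)$. I would pick $P \in \RL^{n \times k}$ whose columns form an orthonormal basis of $V$, where $k = \text{rank}(M)$, and write a generic point as $x = x_* + Py + w$ with $y \in \RL^k$ and $w \in V^\perp$. Because $Mw = 0$, the quadratic form collapses to $(x-x_*)^T M (x-x_*) = y^T (P^T M P) y$, independent of $w$.

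Decomposing $c = P \tilde c + c_{V^\perp}$ with $\tilde c = P^T c$, the linear term $c^T x$ contributes $c_{V^\perp}^T w$, which is unbounded along $w = t\, c_{V^\perp}$ whenever $c_{V^\perp} \neq 0$; this gives the necessity of $c \in \text{col}(M)$. Assuming $c \in \text{col}(M)$, the $w$-dependence drops out and $f$ reduces to the SOCF
\[
\tilde f(y) = (c^T x_* + d) + \tilde c^T y - \sqrt{\delta^2 + y^T (P^T M P) y}
\]
on $\RL^k$ (an SOCF by Remark~\ref{restrictionToAffine}). The matrix $P^T M P$ is positive definite, because if $u \in V$ satisfies $u^T M u = 0$ then $Mu = 0$, forcing $u \in V \cap V^\perp = \{0\}$. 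Theorem~\ref{fBoundedDef} then says $\tilde f$ is bounded above iff $\tilde c^T (P^T M P)^{-1} \tilde c \leq 1$, and since $f(x_* + Py + w)$ does not depend on $w$, boundedness of $\tilde f$ and of $f$ coincide.

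The main obstacle is identifying $\tilde c^T (P^T M P)^{-1} \tilde c$ with $c^T M^+ c$, which reduces to showing $M^+ c = P (P^T M P)^{-1} \tilde c$. The vector on the right lies in $V$, so it suffices to check that $M$ applied to it yields $c$; left-multiplying by $P^T$ gives $\tilde c = P^T c$, and since both $M P (P^T M P)^{-1} \tilde c$ and $c$ lie in $V$ where $P^T$ is injective, equality follows. Because $c \in \text{col}(M)$, the two properties of lying in $V$ and being mapped to $c$ by $M$ characterize $M^+ c$ uniquely. With this identification, the boundedness criterion $\tilde c^T (P^T M P)^{-1} \tilde c \leq 1$ is precisely $c^T M^+ c \leq 1$.
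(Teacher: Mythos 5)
Your proof is correct and follows essentially the same route as the paper: decompose $\RL^n$ orthogonally into $\text{col}(M)$ and $\text{null}(M)$, show unboundedness along the null space when $c \notin \text{col}(M)$, and otherwise reduce to the positive-definite case of Theorem~\ref{fBoundedDef} on the column space, identifying the restricted inverse with $M^+$. Your use of an explicit orthonormal basis matrix $P$ and the careful verification that $M^+ c = P(P^T M P)^{-1} P^T c$ is just a more explicit rendering of the paper's statement that $M^+$ restricted to $\text{col}(M)$ inverts $M$ there.
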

\begin{proof}
Since $M$ is symmetric, the Fundamental Theorem of Linear Algebra states that the null space of $M$
is the orthogonal complement of the 
column space of $M$.
We can write any $x \in \Rn$ as $x = x_n + x_r$ for unique $x_n \in N(M)$ and $x_r \in \text{col}(M)$. 
Similarly, we split $c = c_n + c_r$.
For a fixed $x_*$, we write $x \in \Rn$ as $x = x_* + x_n + x_r$.
Thus $M(x-x*) = M(x_n + x_r) = M x_r$,
and
the general second-order cone function is
$$
f(x_* + x_n + x_r) = c_n^T x_n + c_r^T x_r + (c^T x_* + d) - \sqrt{\delta^2 + {x_r}^T M x_r}.
$$

Assume $c \not \in \text{col}(M)$.  Then $c_n \neq 0$ and $f$ is not bounded since 
$f(x_* + x_n) = c_n^T x_n + (c^T x_* + d - \delta)$ 
is an unbounded linear function.

Assume $c \in \text{col}(M)$, so $c_n = 0$.  Define $g: \text{col}(M) \to \RL$ by
$$
g(x_r) := f(x_* + x_n + x_r) = c_r^T x_r + (c^T x_* + d) - \sqrt{\delta^2 + {x_r}^T M x_r}.
$$
Note that $M$, restricted to $\text{col}(M)$ is a nonsingular map, 
so we can apply Theorem~\ref{fBoundedDef} to $g$ as follows. The pseudo-inverse of $M$ satisfies
$$
M^+(x_n + x_r) = M^+ x_r, \quad MM^+ (x_n + x_r) = M^+ M(x_n + x_r) = x_r.
$$
Thus, the restriction of $M^+$ to $\text{col}(M)$ is the inverse of the restriction of $M$ to $\text{col}(M)$.
Theorem~\ref{fBoundedDef} says that $g$ is bounded above if and only if 
$c_r^T M^+c_r \leq 1$.
Note that $c^T M^+ c = c_r^T M^+ c_r$ for any $c \in \Rn$.

We have shown that $f$ is not bounded above if $c \not \in \text{col}(M)$.
We have also shown that if $c \in \text{col}(M)$ then $f$ is bounded above
if and only if $c^T M^+ c \leq 1$.
These two statements can be combined into one: 
$f$ is bounded above if and only if 
$c\in \text{col}(M)$ and $c^T M^+ c \leq 1$.
\end{proof}
\begin{remark}
    If $M$ is positive definite, then $c \in \text{col}(M) = \Rn$ and $M^+ = M^{-1}$. 
    Thus Theorem~\ref{fBoundedSemiDef}, in the case where $M$ is positive definite, implies the that $f$ is bounded above if and only if $c^T M^{-1}c \leq 1$, which is the first part of Theorem~\ref{fBoundedDef}.
\end{remark}
\begin{example}
    Consider the SOCF on $\RL^2$ with 
    $M = \left[\begin{smallmatrix}4 &0\\0 & 0 \end{smallmatrix} \right ]$, $d = 0$, and $x_* = (0,0)$,
    $$f(x) = c_1 x_1 + c_2 x_2 - \sqrt{\delta^2 + 4 x_1^2}.$$
    
    Note that $f(0, x_2) = c_2 x_2 - \delta$ is not bounded above if $c_2 \neq 0$.  If $c_2 = 0$ then $f(x) = c_1 x_1 - \sqrt{\delta^2 + 4 x_1^2} \leq c_1 x_1 - 2 |x_1|$ and $f(x) \to c_1 x_1 - 2 |x_1|$ as $x_1 \to \pm \infty$.  Thus, $f$ is bounded above if and only if
    $c_2 = 0$ and $c_1^2 \leq 4$.

    This observation is predicted by Theorem~\ref{fBoundedSemiDef}.
    The column space of $M$ is $\text{col}(M) = \{(a,0) \mid a \in \RL\}$, so $c \in \text{col}(M)$ is equivalent to $c_2 = 0$.
    The pseudo-inverse of $M$ is 
    $M^{+} = \frac 1 4 \left[\begin{smallmatrix}1 &0\\0 & 0 \end{smallmatrix} \right ]$, so $c^T M^{+}c = c_1^2/4$, and $c^T M^{-1} c \leq 1$ is equivalent to
    $c_1^2 \leq 4$.
\end{example}

One of that main uses of SOCFs is to define convex sets for optimization problems.
Optimization over a bounded set is very different from optimization over an unbounded set,
so we finish this paper with a simple characterization.
\begin{theorem}
    Let $f: \Rn \to \RL$ be defined by 
    $f(x) = c^T x + d - \sqrt{\delta^2 + (x-x_*)^T M (x-x^*)}$, where $M$ is positive semidefinite.  
    Let $\R := \{x \in \Rn \mid f(x) \geq 0 \}$, and assume 
    $\R \neq \emptyset$.
    The feasible region
    $\R$
    is closed and convex.  Furthermore, $\R$ is bounded if and only if $M$ is positive
    definite and $c^T M^{-1} c < 1$.
    \end{theorem}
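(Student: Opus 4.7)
The plan is to handle closedness and convexity quickly, then prove the boundedness characterization in both directions, with the bulk of the effort going into the ``only if'' direction; the main obstacle will be the boundary case $c^T M^{-1}c = 1$. Closedness and convexity fall out immediately: since $f$ is continuous, $\R = f^{-1}([0, \infty))$ is closed, and concavity of $f$ (Theorem~\ref{strictlyConcave}) gives, for $x_0, x_1 \in \R$ and $t \in [0,1]$, $f((1-t)x_0 + tx_1) \geq (1-t)f(x_0) + t f(x_1) \geq 0$, so $\R$ is convex.

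For the ``if'' direction, suppose $M$ is positive definite and $c^T M^{-1}c < 1$. I would apply the Cauchy--Schwarz inequality in the $M$-inner product --- writing $c^T(x-x_*) = (M^{-1/2}c)^T(M^{1/2}(x-x_*))$ --- to produce the bound $c^T(x-x_*) \leq \sqrt{c^T M^{-1}c}\,\sqrt{(x-x_*)^T M(x-x_*)}$. Combined with the trivial estimate $\sqrt{\delta^2 + (x-x_*)^TM(x-x_*)} \geq \sqrt{(x-x_*)^TM(x-x_*)}$ and the eigenvalue bound $(x-x_*)^T M(x-x_*) \geq \lmin(M)\|x-x_*\|^2$, this gives
$$f(x) \leq c^T x_* + d - \bigl(1 - \sqrt{c^T M^{-1}c}\bigr)\sqrt{\lmin(M)}\,\|x-x_*\|,$$
so $f(x) \to -\infty$ as $\|x\| \to \infty$, and $\R$ must lie in a ball of finite radius.

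For the converse I would argue the contrapositive: if $M$ fails to be positive definite, or $M$ is positive definite with $\kappa := c^T M^{-1}c \geq 1$, then the nonempty $\R$ is unbounded. When $M$ is singular, choose $v \in N(M)\setminus\{0\}$ (flipping its sign if necessary so that $c^T v \geq 0$); since $Mv = 0$ kills the quadratic form, $f(x_0 + tv) = f(x_0) + t\, c^T v \geq 0$ for all $t \geq 0$ and any $x_0 \in \R$, so $\R$ contains a ray. When $M$ is positive definite with $\kappa \geq 1$, take $v = M^{-1}c$, so that $c^T v = v^T M v = \kappa$; expanding the radicand of $f(x_0 + tv)$ as a quadratic in $t$ with leading coefficient $\kappa$ and invoking $\sqrt{1+\eps} = 1 + O(\eps)$ yields the asymptotic $f(x_0 + tv) = c^T x_0 + d + \sqrt{\kappa}(\sqrt{\kappa}-1)\,t + O(1)$ as $t \to \infty$. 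When $\kappa > 1$ this tends to $+\infty$, so $x_0 + tv \in \R$ for all sufficiently large $t$.

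The remaining and most delicate case is $\kappa = 1$: the leading $t$-term cancels and $f(x_0 + tv)$ only approaches the finite limit $c^T x_* + d$, which by parts 2 and 5 of Theorem~\ref{fBoundedDef} equals $\sup f$. Here I would split on whether $\delta > 0$. If $\delta > 0$ the supremum is strictly not attained, so nonemptiness of $\R$ forces $c^T x_* + d > 0$, and then $f(x_0 + tv) > 0$ for all large $t$. If $\delta = 0$, part 2 of Theorem~\ref{fBoundedDef} shows that $f$ equals $c^T x_* + d$ throughout the ray $\{x_* + s M^{-1}c : s \geq 0\}$, so $\R \neq \emptyset$ yields $c^T x_* + d \geq 0$ and this entire ray lies in $\R$. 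In every case $\R$ contains points arbitrarily far from the origin, closing the contrapositive.
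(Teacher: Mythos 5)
Your proof is correct, and while its skeleton (dispose of closedness and convexity via continuity and concavity, then split on whether $M$ is positive definite) matches the paper's, the substance of the boundedness argument is genuinely different and in places more self-contained. For the ``if'' direction the paper simply cites parts 1 and 4 of Theorem~\ref{fBoundedDef} (existence of a unique global maximizer) and asserts boundedness of $\R$; your Cauchy--Schwarz estimate $c^T(x-x_*) \leq \sqrt{c^T M^{-1}c}\,\sqrt{(x-x_*)^T M (x-x_*)}$ gives an explicit linear decay rate $f(x) \leq c^Tx_*+d - \bigl(1-\sqrt{c^TM^{-1}c}\bigr)\sqrt{\lmin(M)}\,\|x-x_*\|$, which both proves the claim directly and produces a concrete radius for a ball containing $\R$. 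For the ``only if'' direction, the paper splits the singular case into $c \notin \text{col}(M)$ (unboundedness of $f$ above, via Theorem~\ref{fBoundedSemiDef}) and $c \in \text{col}(M)$ (translation invariance along $N(M)$), whereas your single null-space ray with $c^Tv \geq 0$ subsumes both subcases at once. Most notably, your explicit treatment of the boundary case $c^TM^{-1}c = 1$ --- where $f(x_0+tv)$ merely tends to $c^Tx_*+d$ and one must use nonemptiness of $\R$ together with non-attainment of the supremum (when $\delta>0$) to conclude $c^Tx_*+d>0$ --- fills in a step the paper passes over with a one-line citation of its earlier theorem and a reference to a figure. The paper's route buys brevity by leaning on its two boundedness theorems; yours buys a quantitative bound and an argument that could stand alone.
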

\begin{proof}
    The set $\R$ is convex since $f$ is concave, and it is closed since $f$ is continuous.  We now prove the last statement of the theorem by determining whether $\R$ is bounded or unbounded for any SOCF.
 
  First, assume $M$ is not positive definite and $c \not \in \text{col}(M)$.  Theorem~\ref{fBoundedSemiDef} says that $f$ is not bounded above and hence $\R$ is not bounded.
  
  Second, assume $M$ is not positive definite and $c \in \text{col}(M)$.  The proof of Theorem~\ref{fBoundedSemiDef} shows that 
  $f(x+ x_n) = f(x)$ for all $x_n \in N(M)$. Choose $\tilde x \in \R$, which is possible since $\R \neq \emptyset$.  The affine subspace $\tilde x + N(M)$ is a subset of $\R$, so $\R$ is unbounded.

Finally,
assume $M$ is positive definite, and thus $M^{-1}$ exists.  The nonempty $\R$ is bounded if and only if conditions 1 or 4 of Theorem~\ref{fBoundedDef} are satisfied. 
Thus, $\R$ is bounded if and only if $c^T M^{-1} c < 1$. 
See Figure \ref{SOCF6} for examples.  
\end{proof}
\begin{remark}
    In the case where $M$ is positive definite and $c^T M^{-1} c < 1$, the compact $\R$ might be trivial.  Let $\tilde d := d - \delta \sqrt{1-c^TM^{-1} c}$.  The set $\R$ is the empty set if $\tilde d < 0$, $\R$ is the singleton set $\{x_{cp}\}$ if $\tilde d = 0$,
    and $\R$ has a non-empty interior if $\tilde d > 0$.
\end{remark}
\setcounter{equation}{0}
\section{Conclusion}
The second-order cone function has important application in optimization problems. Our work gives
necessary and sufficient conditions for strict concavity of a second-order cone function. We show that every SOCF can be written in the 
form
$f(x) = c^T x + d -\sqrt{\delta^2 + (x-x_*)^TM(x-x_*)}$,
which has unique parameters in many cases.
This alternative parameterization gives a deep understanding of the family
of SOCFs.
This alternative description leads to new results about SOCFs.
We characterize the critical points and global maxima of $f$, depending
on the parameters.
We give necessary and sufficient conditions for $f$ to be bounded above,
and for the set $\{x \in \Rn \mid f(x) \geq 0\}$ to be bounded.
Our results can lead to improved algorithms for optimization problems involving second-order cone constraints.

\section*{Funding Statement, Data Availability Statement, Conflicts of Interest, and Acknowledgements}

This research was performed as part of the employment 
of the authors at Northern Arizona University.
The authors received no specific funding for this research.
The data used to support the findings of this study are included within the article.
The authors declare that there is no conflict of interest regarding the publication of this article.
The authors thank the Cornell University arXiv for posting a
preprint of this article \cite{arXiv}.

\end{document}